\theoremstyle{plain}
\newtheorem{theorem}{Theorem}[section]
\newtheorem{proposition}[theorem]{Proposition}
\theoremstyle{definition}
\theoremstyle{remark}
\newtheorem*{remark}{Remark}
\newcommand{\BB}{{\mathbb B}}
\newcommand{\CC}{{\mathbb C}}
\newcommand{\DD}{{\mathbb D}}
\newcommand{\TT}{{\mathbb T}}
\newcommand{\cD}{{\mathcal D}}
\newcommand{\cH}{{\mathcal H}}
\newcommand{\cM}{{\mathcal M}}
\begin{document}

\date{20 Aug 2021}

\title[A GKZ theorem for reproducing kernel Hilbert spaces]{A Gleason--Kahane--\.Zelazko theorem for reproducing kernel Hilbert spaces}

\author[Chu]{Cheng Chu}
\address{D\'epartement de math\'ematiques et de statistique, Universit\'e Laval,
Qu\'ebec City (Qu\'ebec),  Canada G1V 0A6.}
\email{chengchu813@gmail.com}

\author[Hartz]{Michael Hartz}
\address{Fachrichtung Mathematik, Universit\"at des Saarlandes, 66123 Saarbr\"ucken, Germany}
\email{hartz@math.uni-sb.de}

\author[Mashreghi]{Javad Mashreghi}
\address{D\'epartement de math\'ematiques et de statistique, Universit\'e Laval,
Qu\'ebec City (Qu\'ebec),  Canada G1V 0A6.}
\email{javad.mashreghi@mat.ulaval.ca}

\author[Ransford]{Thomas Ransford}
\address{D\'epartement de math\'ematiques et de statistique, Universit\'e Laval,
Qu\'ebec City (Qu\'ebec),  Canada G1V 0A6.}
\email{thomas.ransford@mat.ulaval.ca}

\begin{abstract}
We establish the following Hilbert-space analogue of the Gleason--Kahane-\.Zelazko theorem.
If $\cH$ is a reproducing kernel Hilbert space with a normalized complete Pick kernel,
and if $\Lambda$ is a linear functional on $\cH$ such that $\Lambda(1)=1$ and $\Lambda(f)\ne0$
for all cyclic functions $f\in\cH$, then $\Lambda$ is multiplicative,
 in the sense that $\Lambda(fg)=\Lambda(f)\Lambda(g)$
for all $f,g\in\cH$ such that $fg\in\cH$. 
Moreover $\Lambda$ is automatically continuous.
We give examples to show that the theorem fails if the hypothesis of a complete Pick kernel is omitted.
We also discuss conditions under which $\Lambda$ has to be a point evaluation.
\end{abstract}

\subjclass[2010]{46E22, 46H40}

\keywords{Reproducing kernel Hilbert space, complete Pick kernel, multiplier, cyclic function, point evaluation, automatic continuity}

\maketitle


\section{Introduction and statement of main result}\label{S:intro}

The following result, known as the Gleason--Kahane--\.Zelazko (GKZ) theorem,
char\-acter\-izes multiplicativity of a linear functional on a Banach algebra.

\begin{theorem}[\cite{Gl67,KZ68,Ze68}]\label{T:GKZ}
Let $A$ be a complex unital Banach algebra, 
and let $\Lambda:A\to\CC$ be a linear functional such that $\Lambda\not\equiv0$.
The following statements are equivalent:
\begin{enumerate}[\normalfont\rm(i)]
\item $\Lambda(1)=1$ and $\Lambda(a)\ne0$ for all invertible elements $a\in A$. 
\item $\Lambda(ab)=\Lambda(a)\Lambda(b)$ for all $a,b\in A$.
\end{enumerate}
\end{theorem}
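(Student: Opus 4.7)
The direction (ii)$\Rightarrow$(i) is routine: multiplicativity forces $\Lambda(1)\in\{0,1\}$, and $\Lambda\not\equiv 0$ excludes $0$, while $\Lambda(a)\Lambda(a^{-1})=\Lambda(1)=1$ rules out invertible elements from $\ker\Lambda$. The substantive direction is (i)$\Rightarrow$(ii), where my plan is the classical route through Hadamard's factorization theorem, in three steps.

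First, I would establish automatic continuity with $\|\Lambda\|\le 1$. If $\|a\|<1$ then $1-a$ is invertible via the Neumann series, so $\Lambda(1-a)\ne 0$, i.e.\ $\Lambda(a)\ne 1$; a standard rescaling argument then gives $|\Lambda(a)|\le\|a\|$ for every $a\in A$. This is only preparatory, but it is what will let me treat an exponential series term by term in the main argument.

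The heart of the proof is the \emph{quadratic identity} $\Lambda(a^2)=\Lambda(a)^2$. Replacing $a$ by $a-\Lambda(a)\cdot 1$, it suffices to show that $\Lambda(a)=0$ forces $\Lambda(a^2)=0$. For such an $a$, form the entire function
\[
f(\lambda):=\Lambda\bigl(\exp(\lambda a)\bigr)=\sum_{n\ge 0}\frac{\Lambda(a^n)}{n!}\lambda^n.
\]
The bound from Step~1 gives $|f(\lambda)|\le\exp(|\lambda|\|a\|)$, so $f$ has order at most one, and since $\exp(\lambda a)$ has inverse $\exp(-\lambda a)$, hypothesis (i) makes $f$ nowhere zero. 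Hadamard's theorem then forces $f(\lambda)=e^{\alpha+\beta\lambda}$ for some $\alpha,\beta\in\CC$; the values $f(0)=1$ and $f'(0)=\Lambda(a)=0$ pin $\alpha=\beta=0$, so $f\equiv 1$ and $\Lambda(a^n)=0$ for every $n\ge 1$. In particular $\Lambda(a^2)=0$.

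Finally I would polarize: applying the quadratic identity to $a+b$ yields $\Lambda(ab+ba)=2\Lambda(a)\Lambda(b)$, which already gives (ii) in the commutative case---the only one relevant to the paper's RKHS applications. For a general noncommutative $A$ one completes the argument by showing that $\ker\Lambda$ is also closed under commutators, promoting the Jordan identity to genuine multiplicativity so that $\ker\Lambda$ becomes a two-sided maximal ideal of codimension one. The chief obstacle I expect is the appeal to Hadamard's factorization in Step~2: once one recognizes that the hypothesis produces an entire non-vanishing function of order at most one, the rest is essentially bookkeeping.
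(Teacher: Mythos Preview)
The paper does not actually prove Theorem~\ref{T:GKZ}: it is stated as the classical Gleason--Kahane--\.Zelazko theorem, with citations to \cite{Gl67,KZ68,Ze68}, and serves only as motivation and background for the main result Theorem~\ref{T:main}. So there is no ``paper's own proof'' to compare against.

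That said, your argument is the standard one found in those references and in textbook accounts: continuity from the Neumann-series trick, the entire function $f(\lambda)=\Lambda(\exp(\lambda a))$ of order at most one and zero-free, Hadamard to force $f\equiv 1$ and hence $\Lambda(a^2)=\Lambda(a)^2$, then polarization. Your sketch is correct, including the remark that the commutative case (which is all the RKHS applications need) is already finished at the Jordan identity $\Lambda(ab+ba)=2\Lambda(a)\Lambda(b)$. The noncommutative completion you allude to---showing $\ker\Lambda$ is closed under commutators, hence a genuine two-sided ideal---is precisely \.Zelazko's contribution in \cite{Ze68}; it is a short but nontrivial algebraic manipulation using the Jordan identity twice (first to get $\Lambda(aba)=0$ for $a\in\ker\Lambda$, then to force $\Lambda(ab)^2=-\Lambda(ab)^2$), and you are right not to treat it as entirely automatic.
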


Continuity of the functional is not assumed,
but it is  a consequence of the theorem, since
characters on a Banach algebra are always continuous.

Analogues of this result for certain holomorphic function spaces 
that are not algebras were obtained in \cite{MRR18,MR15}.
Our goal in this paper is to establish the following theorem, which is a version for abstract function spaces.

\begin{theorem}\label{T:main}
Let $\cH$ be a reproducing kernel Hilbert space with a normalized complete Pick kernel,
and let $\Lambda:\cH\to\CC$ be a linear functional such that $\Lambda\not\equiv0$.
The following statements are equivalent:
\begin{enumerate}[\normalfont\rm(i)]
\item $\Lambda(1)=1$ and $\Lambda(f)\ne0$ for all cyclic elements $f\in \cH$. 
\item $\Lambda(fg)=\Lambda(f)\Lambda(g)$ for all $f,g\in\cH$ such that $fg\in\cH$.
\end{enumerate}
Moreover, a functional $\Lambda$ satisfying (i) or (ii) is automatically continuous.
\end{theorem}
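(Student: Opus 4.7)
The implication (ii) $\Rightarrow$ (i) will follow from automatic continuity: multiplicativity forces $\Lambda(1)=1$, and for cyclic $f$ one can find multipliers $\phi_n$ with $\phi_n f\to 1$ in $\cH$, so $\Lambda(\phi_n)\Lambda(f)=\Lambda(\phi_n f)\to 1$, giving $\Lambda(f)\ne 0$. The substance of the theorem is therefore the implication (i) $\Rightarrow$ (ii) together with the continuity assertion.

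The first step of the main direction is to reduce to the multiplier algebra $M:=\mathrm{Mult}(\cH)$, a unital commutative Banach algebra. In a normalized complete Pick space $M$ is dense in $\cH$, so every invertible $\phi\in M$ satisfies $\phi M=M$, which is dense in $\cH$, and is therefore cyclic in $\cH$. Hypothesis (i) thus implies the hypothesis of Theorem~\ref{T:GKZ} applied to $M$, and we conclude that $\chi:=\Lambda|_M$ is a character on $M$, automatically continuous in the multiplier norm.

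The second step uses the Aleman--Hartz--McCarthy--Richter Smirnov-type factorization for complete Pick spaces: every $f\in\cH$ admits a representation $f=\phi/\psi$ with $\phi,\psi\in M$ and $\psi$ cyclic in $\cH$. Since cyclicity of $\psi$ forces $\chi(\psi)=\Lambda(\psi)\ne 0$ by (i), this motivates the candidate formula $\widetilde{\Lambda}(f):=\chi(\phi)/\chi(\psi)$, which is independent of the factorization by a common-denominator calculation. To complete the proof it will suffice to establish the module identity
\[
\Lambda(\psi f)=\chi(\psi)\Lambda(f)\qquad(\psi\in M,\ f\in\cH).
\]
Indeed, applied to a Smirnov representation $f=\phi/\psi$, this gives $\chi(\phi)=\Lambda(\phi)=\Lambda(\psi f)=\chi(\psi)\Lambda(f)$, hence $\Lambda(f)=\chi(\phi)/\chi(\psi)$; a second common-denominator computation yields multiplicativity (ii), and continuity of $\Lambda$ is inherited from the explicit formula combined with norm control on the factorization.

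The main obstacle is therefore the module identity displayed above, equivalently, the assertion that the subspace
\[
\mathcal{I}:=\bigl\{f\in\cH:\Lambda(\psi f)=\chi(\psi)\Lambda(f)\text{ for every }\psi\in M\bigr\}
\]
equals $\cH$. One checks easily that $\mathcal{I}$ is a linear subspace containing $M$ and invariant under multiplication by elements of $M$, so $\mathcal{I}=\cH$ would follow if $\mathcal{I}$ were norm-closed, equivalently if $\Lambda$ were a priori continuous on $\cH$. This is the deepest step of the proof and the place where the complete Pick hypothesis must intervene most decisively---plausibly through a sharper form of the Smirnov factorization whose controlled norms directly yield the estimate $|\Lambda(f)|\leq C\|f\|_{\cH}$, or through proving that the character $\chi$ is weak-$*$ continuous on $M$ and then realizing $\Lambda$ via a representing element of $\cH$ for $\chi$.
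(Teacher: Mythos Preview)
Your outline correctly isolates the module identity
\[
\Lambda(\psi f)=\chi(\psi)\Lambda(f)\qquad(\psi\in\cM,\ f\in\cH)
\]
as the heart of the matter, and your plan to upgrade it to full multiplicativity via the AHMR factorization $f=\phi/\psi$ is exactly what the paper does. But you stop short at the crucial point: you do not actually prove the module identity, and your proposed route to it---prove continuity of $\Lambda$ first, then use density of $\cM$---is circular as stated. The paper's automatic-continuity result (Theorem~\ref{T:autocty}) \emph{assumes} the module identity; it does not derive continuity from hypothesis (i) alone. So you cannot invoke it to close $\mathcal{I}$ before you already have $\mathcal{I}=\cH$. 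Your fallback suggestion, reading continuity off the formula $\Lambda(f)=\chi(\phi)/\chi(\psi)$ with norm control on $\phi,\psi$, also does not work: even with $\|\phi\|_\cM,\|\psi\|_\cM$ controlled, nothing prevents $\chi(\psi)$ from being arbitrarily small.

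The missing ingredient is the Jury--Martin simultaneous factorization (Theorem~\ref{T:JM}): any square-summable sequence $(f_n)$ in $\cH$ can be written as $f_n=h_n g$ with a single cyclic $g\in\cH$ and $\|h_n\|_\cM\le1$. The paper uses this in two ways. First, it feeds into a module version of the GKZ theorem (Theorem~\ref{T:GKZmodule}), whose hypothesis~(S3)---that any two cyclic vectors are linked by multipliers preserving cyclicity---is exactly what the common factor $g$ provides; this yields the module identity directly, with no a~priori continuity. Second, once the module identity is in hand, Jury--Martin gives continuity by contradiction: if $|\Lambda(f_n)|\to\infty$ with $\sum\|f_n\|^2<\infty$, factor $f_n=h_ng$ and note $\Lambda(f_n)=\chi(h_n)\Lambda(g)$ is bounded since $\chi$ is a character. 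Your intuition that a ``sharper Smirnov factorization with controlled norms'' is the key was correct; the point is that it is used algebraically to verify~(S3), not analytically to bound $\Lambda$.
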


The terminology is explained  in \S\ref{S:background},
the automatic continuity of $\Lambda$ is established in \S\ref{S:autocty},
and the rest of the theorem proved in \S\ref{S:proof}.
In \S\ref{S:example} we present  examples showing that the theorem may fail 
if we drop the assumption that $\cH$ has a complete Pick kernel.
Finally, in \S\ref{S:ptevals} we discuss conditions under which we may conclude that
$\Lambda$ is a point evaluation at a point of $X$. 


\section{Background on reproducing kernel Hilbert spaces}\label{S:background}

In this section we give a very brief introduction to reproducing kernel Hilbert spaces
and complete Pick kernels. For more details we refer to the books of 
Paulsen--Raghupathi \cite{PR16} and Agler--McCarthy \cite{AM02}.

\subsection{Reproducing kernel Hilbert spaces}
A \emph{reproducing kernel Hil\-bert space} (RKHS for short)
is a complex Hilbert space $\cH$  of functions on a set $X$ with the property that,
for each $x\in X$, the evaluation functional $f\mapsto f(x)$ is continuous  on $\cH$.
By the Riesz representation theorem, there is a uniquely determined element $k_x\in\cH$
such that $f(x)=\langle f,k_x\rangle$ for all $f\in\cH$.
The \emph{reproducing kernel} associated to $\cH$
is the function $K: X\times X\to\CC$ defined by $K(x,y):=k_y(x)=\langle k_y,k_x\rangle$.

A kernel $K$ is automatically a positive semi-definite function.
This means that, for every finite subset $\{x_1,x_2,\dots,x_n\}$ of $X$,
the $n\times n$ matrix $(K(x_i,x_j))$ is positive semi-definite.

We shall always assume that $K(x,x)>0$ for all $x\in X$.
This is equivalent to supposing that, for each  $x\in X$, there exists
$f\in\cH$ such that $f(x)\ne0$. In particular, this is the case if $K$ is normalized 
(see \S\ref{S:normalized} below).

\subsection{Multipliers}\label{S:multipliers}
Let $\cH$ be a RKHS on $X$.
A \emph{multiplier} of $\cH$ is a function $h:X\to\CC$ such that $hf\in\cH$ whenever $f\in\cH$.
We denote by $\cM$ the set of multipliers.
By the closed graph theorem, if $h\in\cM$, then $M_h:f\mapsto hf$ is a bounded linear operator on $\cH$.
We define $\|h\|_\cM$ to be the operator norm of $M_h$. With this norm $\cM$ becomes a Banach algebra.

If $h\in\cM$ and  $x\in X$, then
a standard calculation shows that $M_h^*k_x=\overline{h(x)}k_x$, so $\overline{h(x)}$
is an eigenvalue of $M_h^*$, and $|h(x)|\le\|h\|_\cM$.
Consequently, every multiplier $h$ is a bounded function on $X$ with $\sup_X|h|\le\|h\|_\cM$.

\subsection{Complete Pick kernels}
Let $K$ be the reproducing kernel of a RKHS $\cH$ on $X$.
It is a \emph{complete Pick kernel}  if $K(x,y)\ne0$ for all $x,y\in X$ and if
there exists $x_0\in X$ such that
\begin{equation}
  \label{eqn:CNP}
  F(x,y):=1-\frac{K(x,x_0)K(x_0,y)}{K(x,y)K(x_0,x_0)}
\end{equation}
is a positive semi-definite function on $X\times X$.
If this condition holds for one $x_0\in X$, then it automatically holds for all $x_0\in X$.

Complete Pick kernels are so called because of their relation with the solution of
Pick interpolation problems. This is described in detail in \cite{AM02}.
Examples of RKHS with complete Pick kernels include the Hardy space~$H^2$,
the Drury--Arveson spaces $H^2_d$ \cite{AM02}, the classical Dirichlet space $\cD$ \cite{Ag88}, 
all Dirichlet spaces with superharmonic weights~$\cD_w$ \cite{Sh02}, 
the Sobolev space~$W^2_1(0,1)$ \cite{Qu93},
as well as certain de Branges--Rovnyak spaces~$\cH(b)$ \cite{Ch20}.

\subsection{Normalized kernels}\label{S:normalized}
A reproducing kernel $K$ for a RKHS $\cH$ on $X$
is said to be \emph{normalized} if there exists  $x_0\in X$ such that $K(x,x_0)=1$ for all $x\in X$.
In this case $k_{x_0}(x):=K(x,x_0)=1$ for all $x\in X$. Since $k_{x_0}\in\cH$,
it follows that $\cH$ contains the constant functions, and hence that $\cM\subset\cH$.

We require the following well-known fact, which follows for instance from the discussion
surrounding Equation (2.2) in \cite{GRS02}. For the convenience of the reader, we provide a short proof.

\begin{proposition}
  \label{P:kernels_multipliers}
  Let $\cH$ be a RKHS on $X$ with a normalized complete Pick kernel $K$ and let $\cM$ be its multiplier
  algebra. Then $k_y = K(\cdot,y) \in \cM$ for all $y \in X$. In particular,
  $\cM$ is dense in $\cH$.
\end{proposition}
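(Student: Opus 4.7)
The plan is to realize $k_y$ as $(1-g_y)^{-1}$ for a multiplier $g_y$ with $\|g_y\|_{\cM}<1$; since $\cM$ is a unital Banach algebra, the invertibility will then automatically give $k_y\in\cM$. To set this up, I would first use the normalization to simplify~\eqref{eqn:CNP}: because $K(x,x_0)=K(x_0,y)=K(x_0,x_0)=1$, the complete Pick hypothesis reduces to the assertion that $F(x,y):=1-1/K(x,y)$ is positive semi-definite on $X\times X$. A Moore--Aronszajn factorization then furnishes a Hilbert space $\cE$ and a map $b:X\to\cE$ with $F(x,y)=\langle b(x),b(y)\rangle_{\cE}$, and on the diagonal one reads $\|b(x)\|_{\cE}^{2}=F(x,x)=1-1/K(x,x)<1$.

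The main step is to prove, for each $y\in X$, that $g_y(x):=F(x,y)=1-1/K(x,y)$ lies in $\cM$ with $\|g_y\|_{\cM}\le\|b(y)\|_{\cE}<1$. I would argue as follows. Fix an orthonormal basis $(e_\alpha)$ of $\cE$ and set $b_\alpha(x):=\langle b(x),e_\alpha\rangle_{\cE}$. By the row version of the multiplier Pick criterion, the row operator $R:\cH\otimes\ell^2\to\cH$ formally given by $(f_\alpha)_\alpha\mapsto\sum_\alpha b_\alpha f_\alpha$ is a well-defined contraction exactly when $\bigl(1-\sum_\alpha b_\alpha(x)\overline{b_\alpha(y)}\bigr)K(x,y)$ is positive semi-definite on $X\times X$. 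Here that kernel equals $(1-F(x,y))K(x,y)\equiv 1$, which is trivially psd. Composing $R$ with the bounded operator $f\mapsto(\overline{b_\alpha(y)}\,f)_\alpha$ of norm $\|b(y)\|_{\cE}$ realizes multiplication by $\sum_\alpha\overline{b_\alpha(y)}b_\alpha=g_y$, so $g_y\in\cM$ with $\|g_y\|_{\cM}\le\|b(y)\|_{\cE}<1$.

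The rest is a short formal step. Since $\|g_y\|_{\cM}<1$, the element $1-g_y$ is invertible in the unital Banach algebra $\cM$; but pointwise $(1-g_y)(x)=1/K(x,y)$, so its inverse in $\cM$ must equal $k_y$, giving $k_y\in\cM$. For the density assertion, the linear span of $\{k_y:y\in X\}$ is dense in any RKHS (its orthogonal complement consists of functions vanishing at every point of $X$), and $\cM\subset\cH$ by the normalization, so $\cM$ is dense in $\cH$.

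The main obstacle I expect to be in the second paragraph. Although the identity $(1-F)K\equiv 1$ is algebraically immediate, converting it into boundedness of an $\ell^2$-indexed row of multiplication operators is precisely where the complete Pick hypothesis does its work, and requires the row/column form of the multiplier Pick criterion. All remaining steps — geometric-series inversion in $\cM$ and density of kernel functions — are routine.
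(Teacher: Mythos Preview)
Your proof is correct and follows essentially the same route as the paper's. Both arguments factor the positive semi-definite function $F(x,y)=1-1/K(x,y)$ through an auxiliary Hilbert space (you via Moore--Aronszajn, the paper via the Agler--McCarthy form $K=1/(1-bb^*)$), invoke the row-multiplier Pick criterion on the identity $(1-F)K\equiv 1$ to get $\|g_y\|_{\cM}=\|b(\cdot)b(y)^*\|_{\cM}<1$, and then recover $k_y$ as $(1-g_y)^{-1}$; the paper writes out the geometric series explicitly while you cite Banach-algebra invertibility, which amounts to the same thing.
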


\begin{proof}
  Since $K$ is a normalized complete Pick kernel, a theorem of Agler and McCarthy \cite[Theorem 4.2]{AM00} shows
  that there exists an auxiliary Hilbert
  space $\mathcal{E}$ and a function $b: X \to B(\mathcal{E},\CC)$
  with $\|b(x)\| < 1$ for all $x \in X$ such that
  \begin{equation*}
    K(x,y) = \frac{1}{1 - b(x) b(y)^*} \quad (x,y \in X).
  \end{equation*}
  Indeed, this follows by choosing $x_0$ in Equation \eqref{eqn:CNP} to be the normalization point,
  defining $\mathcal{E}$ to be the RKHS with reproducing kernel $F$ and $b(x)$ to be the functional on $\mathcal{E}$
  of evaluation at $x$.

  Since
  \begin{equation*}
    (x,y) \mapsto (1 - b(x) b(y)^*) K(x,y) = 1
  \end{equation*}
  is a positive semi-definite function on $X \times X$, the function $b$ is a multiplier
  from $\cH \otimes \mathcal{E}$ into $\cH$ of norm at most $1$
  (see e.g.\ \cite[Theorem~6.28]{PR16}).
  From this, we deduce that for each $y \in X$, the scalar-valued function
  $x \mapsto b(x) b(y)^*$ belongs to $\cM$ and
  \begin{equation*}
    \| b(\cdot) b(y)^*\|_{\cM} \le \|b(y)^*\|_{\mathcal{E}} < 1.
  \end{equation*}
  Therefore, the series
  \begin{equation*}
    K(\cdot,y) = \sum_{n=0}^\infty (b(\cdot) b(y)^*)^n
  \end{equation*}
  converges absolutely in the Banach algebra $\cM$. Hence $K(\cdot,y) \in \cM$ 
  for all $y \in X$. Since the linear span of the kernel functions is dense in any RKHS,
  it follows in particular that $\cM$ is dense in $\cH$.
\end{proof}

\subsection{Cyclic functions}
\label{SS:cyclic}
Let $\cH$ be a RKHS on $X$ with multiplier algebra~$\cM$.
Given $f\in\cH$, we write $[f]:=\overline{\cM f}$, the closed $\cM$-invariant subspace generated by $f$.
Note that, if $h\in\cM$, then $h[f]\subset[hf]$.
We say $f$ is \emph{cyclic} if $[f]=\cH$. 
Clearly, if $f \in \cH$ is cyclic, then $f(x)\ne0$ for all $x\in X$.
In general, the reverse is not true.

It should be remarked that some authors (notably those in \cite{AHMR17}) 
define a multiplier $h \in \cM$ to be `cyclic' if $\overline{h\cH} = \cH$. 
If $\cM$ is dense in $\cH$, then the two notions of cyclic coincide for all $h\in \cM$.
In particular, this is the case whenever $\cH$ has a normalized
complete Pick kernel, by Proposition~\ref{P:kernels_multipliers}.

Cyclic functions play a role in RKHS analogous to that of invertible elements in algebras.
In the case of the Hardy space $H^2$, a function is cyclic if and only if it is an outer function.
This is a consequence of Beurling's classification of the closed, shift-invariant subspaces of $H^2$.
In many other spaces, it is difficult to characterize which functions are cyclic.
In particular, in the case of the classical Dirichlet space, this problem is the subject of 
a well-known conjecture of Brown and Shields \cite{BS84}.


\section{Automatic continuity}\label{S:autocty}

As remarked in the introduction, it is well known (and easy to prove) that a multiplicative 
linear functional on a unital Banach algebra  is automatically continuous. The corresponding 
result for a RKHS with a normalized, complete Pick kernel is also true, but not quite so obvious.
In fact, it is even true under the weaker multiplicativity hypothesis that
$\Lambda(hf)=\Lambda(h)\Lambda(f)$ for all $f\in\cH$ and all $h\in\cM$.
As it will actually play a role in our proof of Theorem~\ref{T:main}, we stop here to prove it  first.

\begin{theorem}\label{T:autocty}
Let $\cH$ be a RKHS with a normalized complete Pick kernel,
let $\cM$ be its multiplier algebra, and let $\Lambda:\cH\to\CC$
be a linear functional such that 
$\Lambda(hf)=\Lambda(h)\Lambda(f)$ for all $f\in\cH$ and all $h\in\cM$.
Then $\Lambda$ is continuous on $\cH$.
\end{theorem}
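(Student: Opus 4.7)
Assume $\Lambda\not\equiv 0$; setting $h=1$ in the multiplicativity hypothesis then forces $\Lambda(1)=1$, so $\Lambda|_\cM$ is a character of the unital Banach algebra $\cM$. By the classical GKZ theorem (Theorem~\ref{T:GKZ}), $\Lambda|_\cM$ is automatically continuous in the multiplier norm: $|\Lambda(h)|\le\|h\|_\cM$ for every $h\in\cM$. The substance of Theorem~\ref{T:autocty} is to upgrade this to continuity in the (weaker) $\cH$-norm on the dense subalgebra $\cM$, which will propagate to continuity on all of $\cH$.

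My approach is to extract a point in the auxiliary Hilbert space $\cE$ of the Agler--McCarthy factorization used in the proof of Proposition~\ref{P:kernels_multipliers}. Writing $b:X\to B(\cE,\CC)$ for the contractive function with $K(x,y)=1/(1-b(x)b(y)^*)$, the scalar function $b_v(x):=b(x)v$ lies in $\cM$ with $\|b_v\|_\cM\le\|v\|_\cE$ for each $v\in\cE$. Hence $v\mapsto\Lambda(b_v)$ is a bounded linear functional on $\cE$ of norm at most one, represented via Riesz by a unique $w\in\cE$ with $\|w\|_\cE\le 1$ and $\Lambda(b_v)=\langle v,w\rangle_\cE$.

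The key technical step is to prove the strict inequality $\|w\|_\cE<1$. Once it is in hand, the Neumann series $\kappa_w:=\sum_{n\ge 0}b_w^n$ converges in $\cM\subset\cH$; combining the analogous series expansion of $k_y$ with the multiplicativity of $\Lambda|_\cM$ yields $\Lambda(k_y)=\overline{\kappa_w(y)}=\langle k_y,\kappa_w\rangle_\cH$ for every $y\in X$, so $\Lambda$ agrees with the bounded functional $f\mapsto\langle f,\kappa_w\rangle_\cH$ on the dense linear span of kernel functions. The multiplicativity identity $\Lambda(k_yf)=\Lambda(k_y)\Lambda(f)$ with $f\in\cH$ arbitrary, together with the fact that $\kappa_w$ is a joint eigenvector of all $M_h^*$ for $h\in\cM$, then transports this equality to the whole of $\cH$, giving continuity.

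The main obstacle is ruling out $\|w\|_\cE=1$; this is the one place where the hypothesis that $\Lambda$ is defined on the entire Hilbert space $\cH$, not merely on $\cM$, must be fully exploited. If $\|w\|_\cE=1$, then $\Lambda(b_w^n)=1$ for every $n$, while $|b_w(x)|<1$ pointwise combined with $\|b_w^n\|_\cH\le 1$ shows $b_w^n\to 0$ weakly in $\cH$. By multiplicativity, $\Lambda(b_w^nf)=\Lambda(f)$ for all $n$ and all $f\in\cH$, and the task is to promote this clash between a constant non-zero value on a weakly null sequence and the linear-functional structure of $\Lambda$ over the whole Hilbert space into a contradiction with $\Lambda(1)=1$.
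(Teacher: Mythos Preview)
Your proposal is not a proof: it is an outline that explicitly stops at the decisive step. You yourself identify the ``main obstacle'' as ruling out $\|w\|_\cE=1$, and you describe the set-up (if $\|w\|=1$ then $\Lambda(b_w^n)=1$ while $b_w^n\to 0$ weakly in $\cH$) but you never derive a contradiction. A linear functional taking the constant value $1$ on a weakly null sequence is only contradictory if the functional is already known to be (weak-)continuous, which is precisely what you are trying to prove. Nothing in your last paragraph breaks this circularity; in particular, passing to $\Lambda(b_w^nf)=\Lambda(f)$ does not help, since for the same reason you cannot exploit the weak convergence $b_w^nf\to 0$. The same circularity infects your ``transport'' step in the case $\|w\|<1$: knowing that $\Lambda$ and $\langle\,\cdot\,,\kappa_w\rangle_\cH$ agree on the dense span of kernel functions does not propagate to all of $\cH$ without continuity, and the multiplicativity relation $\Lambda(k_yf)=\Lambda(k_y)\Lambda(f)$ only tells you that the \emph{difference} $\Psi(f):=\Lambda(f)-\langle f,\kappa_w\rangle_\cH$ is annihilated by multiplication by each $k_y$ in a suitable sense; it does not force $\Psi\equiv 0$.

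The paper's argument avoids all of this by invoking the Jury--Martin factorization (Theorem~\ref{T:JM}) rather than the Agler--McCarthy model. If $\Lambda$ were discontinuous, choose $(f_n)$ with $\sum_n\|f_n\|_\cH^2<\infty$ but $|\Lambda(f_n)|\to\infty$; factor $f_n=h_ng$ with $g\in\cH$ fixed and $\|h_n\|_\cM\le 1$. Then $\Lambda(f_n)=\Lambda(h_n)\Lambda(g)$ forces $|\Lambda(h_n)|\to\infty$, contradicting the automatic continuity of the character $\Lambda|_\cM$ on the Banach algebra $\cM$. The whole point is that Jury--Martin lets one trade an arbitrary $\cH$-norm-small sequence for an $\cM$-norm-bounded sequence of multipliers times a \emph{single} element of $\cH$, which is exactly the leverage your approach lacks.
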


The proof of Theorem~\ref{T:autocty} hinges on the
following factorization theorem due to Jury and Martin \cite[Theorem~1.1]{JM19}.
This result was proved by using non-commutative factorization theorems
due to Arias and Popescu \cite{AP95} and Davidson and Pitts \cite{DP99}.

\begin{theorem}\label{T:JM}
Let $\cH$ be a RKHS with a normalized complete Pick kernel, and let $\cM$ be its multiplier algebra.
If $(f_n)$ is a sequence in $\cH$ such that $\sum_n\|f_n\|_\cH^2<\infty$, then there exists
a sequence $(h_n)$ in $\cM$ and a cyclic function $g\in\cH$ such that:
\begin{itemize}
\item $f_n=h_ng$ for all $n$;
\item $\sum_n\|f_n\|_{\cH}^2= \|g\|_{\cH}^2$;
\item $\sum_n\|h_nf\|_{\cH}^2\le \|f\|_{\cH}^2$ for all $f\in\cH$, and in particular $\|h_n\|_\cM\le 1$
for all $n$.
\end{itemize}
\end{theorem}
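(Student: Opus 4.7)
The plan is to transport the factorization problem from the abstract complete Pick space $\cH$ to the full non-commutative Fock space, where non-commutative Beurling-type factorization theorems are available, and then push the resulting factorization back down.

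First, I would invoke the Agler--McCarthy realization used in the proof of Proposition~\ref{P:kernels_multipliers} to obtain an auxiliary Hilbert space $\cE$ and a function $b:X\to B(\cE,\CC)$ with $K(x,y)=(1-b(x)b(y)^*)^{-1}$. This exhibits $\cH$ (unitarily equivalent to) a coinvariant subspace of the symmetric Fock space over $\cE$, and hence as a coisometric quotient $q:\mathcal{F}^2(\cE)\to\cH$ of the full Fock space $\mathcal{F}^2(\cE)$. Under $q$, the left free analytic Toeplitz operators descend to multiplication operators by elements of $\cM$ with no increase in norm, so that the free semigroup algebra effectively covers the multiplier algebra of $\cH$.

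Next, I would view the square-summable column $(f_n)\in\cH\otimes\ell^2$ and lift it to a column $(F_n)$ in $\mathcal{F}^2(\cE)\otimes\ell^2$ of the same $\ell^2$-norm, then invoke the non-commutative inner--outer factorization of Arias--Popescu \cite{AP95} and Davidson--Pitts \cite{DP99}. Their theorem produces a column contraction of left free analytic Toeplitz operators $(H_n)$, that is, $\sum_n\|H_nF\|^2\le\|F\|^2$ for all $F$, together with a wandering (outer) vector $G\in\mathcal{F}^2(\cE)$, such that $F_n=H_nG$ for each $n$ and $\sum_n\|F_n\|^2=\|G\|^2$. The norm identity here expresses the column-extremality built into the factorization, and it is the crux for ensuring the analogous equality downstairs.

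Finally, I would push the factorization through $q$. Writing $h_n\in\cM$ for the multiplier induced by $H_n$ and setting $g:=q(G)$, the identity $F_n=H_nG$ descends to $f_n=h_ng$, the column contractivity of $(H_n)$ descends to $\sum_n\|h_nf\|_\cH^2\le\|f\|_\cH^2$ for all $f\in\cH$, and the coisometry of $q$ combined with $\|g\|_\cH\le\|G\|$ and $\sum_n\|f_n\|_\cH^2=\|G\|^2$ pins down the norm equality $\|g\|_\cH^2=\sum_n\|f_n\|_\cH^2$. The main obstacle is verifying that $g$ is genuinely cyclic in $\cH$: outerness upstairs says $G$ generates $\mathcal{F}^2(\cE)$ under the free Toeplitz algebra, and one must argue that this property descends through $q$ to $\overline{\cM g}=\cH$. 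The norm equality $\|g\|_\cH=\|G\|$ shows that $g$ is captured by $q$ without loss, and combined with the intertwining of the Toeplitz-type algebras this should allow one to transfer the wandering-generator property upstairs to cyclicity of $g$ downstairs; making this last step precise is the heart of the Jury--Martin argument.
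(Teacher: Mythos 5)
The paper itself contains no proof of this statement: Theorem~\ref{T:JM} is imported verbatim as Theorem~1.1 of Jury--Martin \cite{JM19}, with only the remark that its proof rests on the non-commutative factorization theorems of Arias--Popescu \cite{AP95} and Davidson--Pitts \cite{DP99}. Your sketch retraces precisely that route --- Agler--McCarthy realization, passage to a coinvariant subspace of the Fock space over $\cE$, non-commutative inner--outer factorization of the lifted column, compression back to $\cH$ --- so you are reconstructing the cited argument rather than proposing an alternative, and the outline is sound.

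Two steps in your outline are left open and should be closed. First, the cyclicity of $g$, which you correctly flag as the crux, follows from coinvariance rather than from the norm identity: if $N\cong\cH$ is coinvariant for the relevant free analytic Toeplitz algebra $\mathcal{A}$, then for every $A\in\mathcal{A}$ and every vector $\xi$ one has $P_N A\xi=(P_N A P_N)(P_N\xi)$, because $A$ maps the invariant subspace $N^\perp$ into itself; applying this to the dense orbit $\mathcal{A}G$ shows that $\cM g$ contains $P_N(\mathcal{A}G)$, which is dense in $P_N\bigl(\mathcal{F}^2(\cE)\bigr)=\cH$. In particular you do \emph{not} need (and in general do not have) $\|g\|_{\cH}=\|G\|$, so the sentence suggesting that ``$g$ is captured by $q$ without loss'' is the wrong mechanism. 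Second, your derivation of $\sum_n\|f_n\|_{\cH}^2=\|g\|_{\cH}^2$ only yields one inequality: the isometric lift and the upstairs equality give $\|g\|_{\cH}^2\le\|G\|^2=\sum_n\|f_n\|_{\cH}^2$. The reverse inequality comes from applying the already-established downstairs column contractivity to the vector $g$ itself, namely $\sum_n\|f_n\|_{\cH}^2=\sum_n\|h_n g\|_{\cH}^2\le\|g\|_{\cH}^2$; combining the two gives the claimed equality. With these two points supplied (and some care about which side of the free shift algebra furnishes the inner column and which furnishes outerness, so that the compression really lands in $\cM$), the sketch becomes a complete proof along the lines of \cite{JM19}.
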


\begin{proof}[Proof of Theorem~\ref{T:autocty}]
Suppose, if possible, that $\Lambda$ is not continuous on $\cH$.
Then there exists a sequence $(f_n)$ in $\cH$ such that $\sum_n\|f_n\|_{\cH}^2<\infty$
but $|\Lambda(f_n)|\to\infty$. By Theorem~\ref{T:JM}, we can factorize $f_n$ as $f_n=h_ng$,
where $g\in\cH$ and $(h_n)$ is a sequence in the multiplier algebra $\cM$ of $\cH$
such that $\|h_n\|_\cM\le 1$ for all $n$. By the multiplicativity hypothesis on $\Lambda$,
we have $\Lambda(f_n)=\Lambda(h_n)\Lambda(g)$ for all $n$, which forces $|\Lambda(h_n)|\to\infty$.
On the other hand, $\Lambda|_\cM$ is a character on the Banach algebra $\cM$, so it is automatically continuous
on $\cM$,
and as the sequence $(h_n)$ is bounded in the norm of $\cM$, it follows that $\Lambda(h_n)$ is bounded too.
We have arrived at a contradiction.
\end{proof}


\section{Proof of Theorem~\ref{T:main}}\label{S:proof}

The proof of Theorem~\ref{T:main} is based on the following  result,
which is a version of the  classical GKZ theorem for modules
\cite[Theorem~1.2]{MR15}.

\begin{theorem}\label{T:GKZmodule}
Let $A$ be a  complex unital Banach algebra, let $M$ be a left $A$-module,
and let $S$ be a non-empty subset of $M$ satisfying the following conditions:
\begin{itemize}
\item[(S1)] $S$ generates $M$ as an $A$-module;
\item[(S2)] if $a\in A$ is invertible and $s\in S$, then $a s\in S$;
\item[(S3)] for all $s_1,s_2\in S$, there exist $a_1,a_2\in A$ such that 
$a_j S\subset S~(j=1,2)$ and $a_1 s_1=a_2 s_2$.
\end{itemize}
Let $\Lambda:M\to\CC$ be a linear functional such that 
$\Lambda(s)\ne0$ for all $s\in S$. 
Then there exists a unique character $\chi$ on $A$ such that
\begin{equation}\label{E:lambdachi}
\Lambda(a m)=\chi(a)\Lambda(m) \qquad(a\in A,~m\in M).
\end{equation}
\end{theorem}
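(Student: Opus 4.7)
My plan is to construct $\chi$ by using the elements of $S$ as normalization points and then glue them together. For each $s \in S$, define a candidate character $\chi_s: A \to \CC$ by
\begin{equation*}
\chi_s(a) := \Lambda(as)/\Lambda(s),
\end{equation*}
which makes sense since $\Lambda$ does not vanish on $S$. Each $\chi_s$ is linear with $\chi_s(1) = 1$, and by (S2) every invertible $a \in A$ sends $s$ into $S$, so $\chi_s(a) \ne 0$. Applying the classical Gleason--Kahane--\.Zelazko theorem (Theorem~\ref{T:GKZ}) to $\chi_s$ then yields that each $\chi_s$ is a character on the Banach algebra $A$.

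The main obstacle, and really the only substantive step, will be to show that $\chi_s$ is independent of $s$. I would do this by using (S3) to connect any two elements. Given $s_1, s_2 \in S$, invoke (S3) to find $a_1, a_2 \in A$ with $a_j S \subset S$ and $t := a_1 s_1 = a_2 s_2$; note that $t \in S$. For any $a \in A$, the multiplicativity of $\chi_{s_1}$ and $\chi_{s_2}$ together with $aa_1 s_1 = aa_2 s_2$ gives
\begin{align*}
\chi_{s_1}(a)\chi_{s_1}(a_1)\Lambda(s_1)
&= \Lambda(aa_1 s_1) = \Lambda(aa_2 s_2) \\
&= \chi_{s_2}(a)\chi_{s_2}(a_2)\Lambda(s_2).
\end{align*}
Specialising to $a = 1$ shows that the common factor $\chi_{s_1}(a_1)\Lambda(s_1) = \chi_{s_2}(a_2)\Lambda(s_2) = \Lambda(t)$ is nonzero. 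Dividing then gives $\chi_{s_1}(a) = \chi_{s_2}(a)$, so $\chi := \chi_s$ is a well-defined character on $A$, independent of $s \in S$.

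It remains to verify \eqref{E:lambdachi} and uniqueness. By (S1), any $m \in M$ can be written as a finite sum $m = \sum_i b_i s_i$ with $b_i \in A$ and $s_i \in S$. Combining linearity of $\Lambda$, multiplicativity of $\chi$, and the identity $\Lambda(b_i s_i) = \chi(b_i)\Lambda(s_i)$ established above yields
\begin{equation*}
\Lambda(am) = \sum_i \Lambda(ab_i s_i) = \sum_i \chi(a)\chi(b_i)\Lambda(s_i) = \chi(a)\Lambda(m),
\end{equation*}
as required. Uniqueness is immediate: any character $\chi'$ satisfying \eqref{E:lambdachi} must obey $\chi'(a) = \Lambda(as_0)/\Lambda(s_0)$ for any fixed $s_0 \in S$, which determines $\chi'$ completely and recovers $\chi$.
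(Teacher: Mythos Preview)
The paper does not actually prove Theorem~\ref{T:GKZmodule}; it is quoted from \cite[Theorem~1.2]{MR15} and used as a black box in the proof of Theorem~\ref{T:main}. So there is no in-paper argument to compare against.

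That said, your proof is correct and is essentially the argument given in the original source \cite{MR15}: define $\chi_s(a):=\Lambda(as)/\Lambda(s)$ for each $s\in S$, invoke the classical GKZ theorem (via (S2)) to see that each $\chi_s$ is a character, use (S3) to show that $\chi_s$ is independent of $s$, and finally use (S1) to extend the relation $\Lambda(as)=\chi(a)\Lambda(s)$ from $S$ to all of $M$. Your handling of the independence step is clean: the key point, which you identify, is that $\Lambda(t)=\chi_{s_j}(a_j)\Lambda(s_j)\ne0$ because $t=a_js_j\in a_jS\subset S$, allowing you to cancel. Uniqueness is indeed immediate from evaluating at any fixed $s_0\in S$.
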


We also need another factorization theorem for RKHS's, this one due to
Aleman, McCarthy, Richter  and the second author \cite[Lemma~2.3 and Theorem~3.1]{AHMR17}.
See also  \S\ref{SS:cyclic} regarding the definition of cyclicity used in \cite{AHMR17}.
 
\begin{theorem}\label{T:AHMR}
Let $\cH$ be a RKHS on $X$ with a normalized complete Pick kernel, and let $\cM$ be its multiplier algebra.
Then, given $f\in\cH$, there exist $h_1,h_2\in\cM$, with $h_2$ cyclic in $\cH$, such that $f=h_1/h_2$.
\end{theorem}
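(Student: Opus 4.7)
The plan is to derive Theorem~\ref{T:AHMR} by invoking the Jury--Martin factorization (Theorem~\ref{T:JM}) on a carefully chosen two-term sequence. The key observation is that, because the kernel is normalized, the constant function $1$ belongs to $\cH$, so I can place it alongside $f$ in a square-summable sequence and exploit the simultaneous factorization $f_n=h_ng$ with a common cyclic denominator $g$.

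First, I would apply Theorem~\ref{T:JM} to the sequence $(f,1,0,0,\ldots)$, whose squared $\cH$-norms sum to $\|f\|_\cH^2+\|1\|_\cH^2<\infty$. This produces multipliers $h_1,h_2\in\cM$ (with all later $h_n=0$) and a cyclic function $g\in\cH$ satisfying
\begin{equation*}
f=h_1g \quad\text{and}\quad 1=h_2g.
\end{equation*}
The second identity forces $g(x)=1/h_2(x)$ pointwise on $X$ (note in particular that $h_2$ vanishes nowhere on $X$), and substituting into the first yields the desired representation $f=h_1/h_2$.

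The remaining task---and the part I expect to require the most care---is to verify that $h_2$ is cyclic in the sense of \S\ref{SS:cyclic}, i.e.\ $\overline{\cM h_2}=\cH$. This does not follow immediately from $h_2g=1$ since $g$ need not be a multiplier. However, by Proposition~\ref{P:kernels_multipliers}, $\cM$ is dense in $\cH$, so I can choose a sequence $(m_n)\subset\cM$ with $m_n\to g$ in $\cH$. The continuity of the multiplication operator $M_{h_2}$ then gives $h_2m_n\to h_2g=1$ in $\cH$, so $1\in\overline{\cM h_2}$. Because $\overline{\cM h_2}$ is a closed $\cM$-invariant subspace of $\cH$, it must contain $\cM\cdot 1=\cM$, and taking closures once more yields $\overline{\cM h_2}\supseteq\overline{\cM}=\cH$. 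Hence $h_2$ is cyclic, completing the argument.
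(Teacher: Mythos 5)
Your proof is correct, and it takes a genuinely different route from the paper, which does not prove Theorem~\ref{T:AHMR} at all but imports it from \cite{AHMR17} (Lemma~2.3 and Theorem~3.1 there), where the representation $f=\varphi/(1-\psi)$ is extracted from a Schur-class realization of $f$ and the cyclicity of the denominator $1-\psi$ is established by a separate argument. You instead deduce everything from the Jury--Martin factorization (Theorem~\ref{T:JM}) applied to the two-term sequence $(f,1,0,0,\dots)$: the identity $1=h_2g$ forces $h_2$ to be nonvanishing and $g=1/h_2$, whence $f=h_1g=h_1/h_2$; and your cyclicity check is sound, since $h_2m_n\in\cM h_2$ with $h_2m_n\to h_2g=1$ gives $1\in[h_2]$, and $[h_2]$ being closed and $\cM$-invariant then contains $\overline{\cM\cdot 1}=\overline{\cM}=\cH$ by Proposition~\ref{P:kernels_multipliers}. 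What your approach buys is a proof that is self-contained modulo results already quoted in the paper, which is arguably preferable expositionally. The only caveat is historical rather than logical: Theorem~\ref{T:JM} postdates and strengthens the factorization theory of \cite{AHMR17}, so your derivation inverts the order in which these results were actually obtained; within this paper, where Theorem~\ref{T:JM} is taken as a black box, there is no circularity and the argument is complete.
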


\begin{proof}[Proof of Theorem~\ref{T:main}]
We begin with the implication (i)$\Rightarrow$(ii).
Assume that (i) holds, namely $\Lambda(1)=1$ and $\Lambda(f)\ne0$ for all cyclic functions $f\in\cH$.

The plan is to apply Theorem~\ref{T:GKZmodule}
with $M=\cH$, taking $A$ to be the multiplier algebra $\cM$ of $\cH$, and
$S$ to be  the set of cyclic functions in $\cH$.
We verify the three conditions in that theorem:
\begin{itemize}
\item[(S1)] Given $f\in\cH$,  Theorem~\ref{T:JM} implies that we can write $f=hg$, where $h\in\cM$ and $g\in S$.
\item[(S2)] If $h$ is invertible in $\cM$, then $\cM(hf)=\cM f$. In particular, if $f$ is cyclic,  so is $hf$.
\item[(S3)] Given  $f_1,f_2\in S$, Theorem~\ref{T:JM} shows that we can write $f_1=a_2g$ and $f_2=a_1g$,
where $a_1,a_2\in\cM$ and $g\in S$.
Clearly we have $a_1f_1=a_2f_2$.
It remains to check that $a_jS\subset S$.
Let $f\in S$. Then $[a_1f]\supset a_1[f]=a_1\cH\ni a_1g=f_2$,
and so $[a_1f]\supset[f_2]=\cH$. Hence $a_1f\in S$. Likewise for $a_2$.
\end{itemize}

By Theorem~\ref{T:GKZmodule}, there exists a character $\chi$ on $\cM$ such that \eqref{E:lambdachi}
holds, i.e.,
\[
\Lambda(hf)=\chi(h)\Lambda(f) \quad (h\in\cM,~f\in\cH).
\]
In particular, taking $f=1$, we see that $\chi(h)=\Lambda(h)$ for all $h\in\cM$. It follows that 
\begin{equation}\label{E:weakconcl}
\Lambda(hf)=\Lambda(h)\Lambda(f) \quad (h\in\cM,~f\in\cH).
\end{equation}

This is nearly the desired conclusion (ii).
To get the full conclusion, we employ a trick borrowed from \cite{AHMR17}.
Let $f,g\in\cH$ be such that $fg\in\cH$. 
By Theorem~\ref{T:AHMR}, we can write $f=h_1/h_2$, 
where $h_1,h_2\in\cM$ and $h_2$ is cyclic in $\cH$.
Then, using \eqref{E:weakconcl} several times, we have
\begin{align*}
\Lambda(h_2)\Lambda(fg)&=\Lambda(h_2fg)=\Lambda(h_1g)=\Lambda(h_1)\Lambda(g)\\
&=\Lambda(h_2f)\Lambda(g)=\Lambda(h_2)\Lambda(f)\Lambda(g).
\end{align*}
Since $h_2$ is cyclic, we have $\Lambda(h_2)\ne0$, so we can cancel off the $\Lambda(h_2)$ terms
and conclude that $\Lambda(fg)=\Lambda(f)\Lambda(g)$, as desired.
This completes the proof that (i)$\Rightarrow$(ii).

We now turn to the reverse implication in Theorem~\ref{T:main}, namely (ii)$\Rightarrow$(i).
Assume that (ii) holds, i.e., $\Lambda(fg)=\Lambda(f)\Lambda(g)$ for all $f,g\in\cH$
with  $fg\in\cH$.

Since $\Lambda(1)=\Lambda(1\cdot 1)=\Lambda(1)^2$, we have $\Lambda(1)=0$ or $1$.
In the former case, we have $\Lambda(f)=\Lambda(1\cdot f)=\Lambda(1)\Lambda(f)=0$ for all $f\in\cH$,
contrary to the hypothesis that $\Lambda\not\equiv0$. So $\Lambda(1)=1$.

Let $f$ be a cyclic function in $\cH$. From the definition of cyclicity, there exists a sequence
of multipliers $(h_n)$ such that $h_nf\to1$ in $\cH$. 
By Theorem~\ref{T:autocty}, the assumption that $\Lambda$ is multiplicative implies that $\Lambda$
is continuous on $\cH$, and so $\Lambda(h_n)\Lambda(f)=\Lambda(h_nf)\to\Lambda(1)=1$.
This implies that $\Lambda(f)\ne0$, and
completes the proof that (ii)$\Rightarrow$(i).

Finally, the continuity statement in Theorem~\ref{T:main} follows directly from Theorem~\ref{T:autocty}.
\end{proof}


\section{Two examples on the complete Pick kernel condition}\label{S:example}

In this section we present examples showing that Theorems~\ref{T:main} and~\ref{T:autocty}
are no longer true if we drop the assumption that $\cH$
has a complete Pick kernel. We begin with an example relating to Theorem~\ref{T:main}.

\begin{proposition}
  \label{P:example}
There exist a RKHS $\cH$ on a set $X$ and a linear functional $\Lambda:\cH\to\CC$ with the following properties:
\begin{enumerate}[\normalfont\rm(i)]
\item The reproducing kernel $K:X\times X\to\CC$ associated to $\cH$ is normalized, and satisfies $K(x,y)\ne0$ for all $x,y\in X$.
\item The linear functional $\Lambda$ satisfies $\Lambda(1)=1$ and $\Lambda(f)\ne0$ for all cyclic elements $f\in\cH$.
Furthermore $\Lambda$ is continuous on $\cH$.
\item There exists a multiplier $h$ of $\cH$ such that $\Lambda(h^2)\ne \Lambda(h)^2$.
\end{enumerate}
\end{proposition}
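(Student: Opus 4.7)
The plan is to construct the example directly, guided by a structural observation that pinpoints the essential feature such an example must possess. Suppose $(\cH,\Lambda)$ satisfies the three properties. Then $\Lambda|_\cM$ is a non-multiplicative linear functional on the unital Banach algebra $\cM$ with $\Lambda(1)=1$, so the classical GKZ theorem applied to $\cM$ supplies an $h_0\in\cM$ invertible in $\cM$ with $\Lambda(h_0)=0$. Property~(ii) then forces $h_0$ not to be cyclic in $\cH$. Since $h_0$ invertible in $\cM$ yields $\cM h_0=\cM$ as subsets of $\cH$, non-cyclicity of $h_0$ is equivalent to $\overline\cM\subsetneq\cH$. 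This is precisely the situation ruled out in a normalized complete Pick space by Proposition~\ref{P:kernels_multipliers}, and accordingly the example must inhabit a non-CPK RKHS whose multiplier algebra fails to be dense.

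Guided by this, I would construct $\cH$ explicitly as a non-CPK RKHS on some set $X$ with normalized nowhere-vanishing kernel whose multiplier algebra $\cM$ is not dense in $\cH$, yet contains nontrivial invertible elements (so that~(iii) is not vacuous). To define $\Lambda$, I would use the orthogonal decomposition $\cH=\overline\cM\oplus(\overline\cM)^\perp$: on $\overline\cM$ prescribe a continuous linear functional with $\Lambda(1)=1$ and $\Lambda(h_0)=0$ for some chosen invertible $h_0\in\cM$ that is not a constant. This choice makes $\Lambda|_\cM$ non-multiplicative because $\Lambda(h_0)\Lambda(h_0^{-1})=0\ne 1=\Lambda(h_0 h_0^{-1})$; since $\cM$ is commutative, polarization then yields an $h\in\cM$ with $\Lambda(h^2)\ne\Lambda(h)^2$, establishing~(iii). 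On $(\overline\cM)^\perp$ the functional $\Lambda$ is extended so as not to vanish on any cyclic element of $\cH$. This is feasible in principle because $\cM\cdot\overline\cM\subset\overline\cM$ implies $[f]\subset\overline\cM$ whenever $f\in\overline\cM$, so every cyclic $f$ has a nonzero component in $(\overline\cM)^\perp$, and $\Lambda$ can be tailored on that subspace to miss the cyclic set.

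The main obstacle lies in two interwoven tasks. First, one must produce a concrete normalized, nowhere-vanishing RKHS kernel whose multiplier algebra fails to be dense; most familiar non-CPK spaces, such as the Bergman space or the Hardy space of a polydisk, have dense multiplier algebras, so the example is necessarily somewhat ad hoc (a weighted Hardy-type space on $\DD$ or a tailored kernel on a countable subset of $\CC$ are natural candidates). Second, one must understand the cyclic elements of the chosen $\cH$ precisely enough to verify that the extension of $\Lambda$ to $(\overline\cM)^\perp$ genuinely avoids them. Once both tasks are accomplished, the verification of~(i)--(iii) reduces to elementary linear algebra together with a Hahn--Banach extension.
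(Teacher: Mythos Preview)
Your structural analysis is correct and well motivated: any counterexample must have $\overline{\cM}\subsetneq\cH$, and your deduction of this via classical GKZ on $\cM$ is sound, as is the polarization step for~(iii). But the proposal is a plan, not a proof, and both of the tasks you flag as ``the main obstacle'' are genuinely unresolved.

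First, you do not construct the space. Exhibiting a normalized, nowhere-vanishing kernel with $\overline{\cM}\subsetneq\cH$ is the entire content of the proposition, and suggesting ``a weighted Hardy-type space or a tailored kernel on a countable set'' does not discharge it. Second, your extension step on $(\overline{\cM})^\perp$ is not justified: you correctly note that every cyclic $f=f_1+f_2$ has $f_2\ne 0$, but arranging $\Lambda(f_1)+\Lambda(f_2)\ne 0$ simultaneously for all cyclic $f$ requires control over the cyclic set that you do not have. A single continuous functional has a hyperplane kernel, and there is no a~priori reason the cyclic set should avoid some hyperplane; Hahn--Banach alone does not produce this.

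The paper sidesteps both difficulties with one stroke: it chooses $\cH$ so that there are \emph{no cyclic elements whatsoever}, making~(ii) vacuous and the choice of $\Lambda$ essentially free. Concretely, $\cH$ is the tensor product of the Segal--Bargmann space on $\CC$ with $H^2(\DD)$, a RKHS on $X=\CC\times\DD$ with kernel
\[
K\bigl((z_1,z_2),(w_1,w_2)\bigr)=\frac{e^{z_1\overline{w}_1}}{1-z_2\overline{w}_2},
\]
which is normalized at $(0,0)$ and nowhere zero. Every multiplier is bounded holomorphic on $\CC\times\DD$, hence by Liouville independent of $z_1$; this forces $[f]\subsetneq\cH$ for every $f\in\cH$. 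One then takes $\Lambda\bigl(\sum a_{jk}z_1^jz_2^k\bigr):=a_{00}+a_{01}$ and $h(z_1,z_2):=z_2$, so that $\Lambda(h)=1$ but $\Lambda(h^2)=0$. Your framework would accommodate this example a posteriori, but the decisive idea---engineering the absence of cyclic elements via Liouville---is what your proposal is missing.
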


\begin{proof}
(i) Let 
\[
\cH:=\Bigl\{ f(z_1,z_2):=\sum_{j,k\ge0}a_{jk}z_1^jz_2^k:~ \|f\|_\cH^2:=\sum_{j,k\ge0}j!|a_{jk}|^2<\infty\Bigr\}.
\]
This is a RKHS of holomorphic functions on $\CC\times\DD$. 
It has the orthonormal basis $\{z_1^jz_2^k/(j!)^{1/2}:j,k\ge0\}$,
from which we deduce that its reproducing kernel is given by
\[
K\bigl((z_1,z_2),(w_1,w_2)\bigr)
=\sum_{j,k\ge0}\frac{z_1^jz_2^k}{(j!)^{1/2}}\frac{\overline{w}_1^j\overline{w}_2^k}{(j!)^{1/2}}
=\frac{e^{z_1\overline{w}_1}}{1-z_2\overline{w}_2}.
\]
Clearly $K$ is normalized at $(0,0)$, and it is non-zero everywhere on $\CC\times\DD$.
(In fact $\cH$ is the  tensor product of the Segal--Bargmann space and the Hardy space, 
and so $K$ is the product of the kernels of these two spaces.)

(ii) Define $\Lambda:\cH\to\CC$ by
\[
\Lambda\Bigl(\sum_{j,k\ge0}a_{jk}z_1^jz_2^k\Bigr):=a_{00}+a_{01}.
\]
This is a continuous linear functional on $\cH$. Clearly $\Lambda(1)=1$.
Also, the condition that $\Lambda(f)\ne0$ for all cyclic $f\in\cH$ is satisfied vacuously, 
because there are no cyclic elements of $\cH$.
Indeed, every multiplier of $\cH$ is a bounded holomorphic function on $\CC\times\DD$,
so by Liouville's theorem it must be independent of the first variable. This evidently precludes
the existence of cyclic functions.

(iii) Let $h(z_1,z_2):=z_2$ . 
Multiplication by $h$ is an isometry on $\cH$, so $h$ is certainly a multiplier. Also, we have
$\Lambda(h)=1$
and $\Lambda(h^2)=0$,
so $\Lambda(h^2)\ne\Lambda(h)^2$.
\end{proof}

\begin{remark}
In the example constructed in the proof of Proposition~\ref{P:example},
the multiplier algebra $\cM$ is not dense in $\cH$.
This is no accident.
Indeed, if $\cM$ is dense in $\cH$, then
every invertible multiplier
is a cyclic element of $\cH$. Thus the classical GKZ theorem (Theorem~\ref{T:GKZ}),
applied to the restriction of $\Lambda$ to $\cM$, shows that
\[
\Lambda(h_1 h_2) = \Lambda(h_1) \Lambda(h_2) \quad (h_1,h_2 \in \cM),
\]
and if, further, $\Lambda$ is continuous on $\cH$, then it follows that
\[
 \Lambda(h f) = \Lambda(h) \Lambda(f) \quad (h \in \cM, f \in \cH).
\]
\end{remark}

Concerning the continuity of $\Lambda$,
one might also ask if the automatic continuity result, Theorem \ref{T:autocty}, holds 
if we drop the assumption that $\cH$ has a complete Pick kernel.
The following example shows that this is not the case,
even if the multiplier algebra $\cM$ is dense in $\cH$.

\begin{proposition}
There exist a RKHS $\cH$ of holomorphic functions on $\DD$ and a linear functional
$\Lambda: \cH \to \CC$ with the following properties:
\begin{enumerate}[\normalfont\rm(i)]
    \item The reproducing kernel $K$ associated to $\cH$ is normalized at $0$.
    \item The multiplier algebra $\cM$ of $\cH$ equals $H^\infty$ and is dense in $\cH$.
    \item The linear functional $\Lambda$ satisfies $\Lambda(1) = 1$ and $\Lambda(h f) = \Lambda(h) \Lambda(f)$
      for all $h \in \cM$ and all $f \in \cH$.
    \item $\Lambda$ is discontinuous on $\cH$.
  \end{enumerate}
\end{proposition}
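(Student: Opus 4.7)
The plan is to produce an RKHS $\cH$ on $\DD$ and a linear functional $\Lambda$ satisfying (i)--(iv) simultaneously. I begin with an algebraic reformulation and then identify the main difficulty.

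A linear functional $\Lambda:\cH\to\CC$ with $\Lambda(1)=1$ and the module identity $\Lambda(hf)=\Lambda(h)\Lambda(f)$ for $h\in\cM,f\in\cH$ is the same data as a character $\chi:=\Lambda|_{\cM}$ of $\cM=H^\infty$ together with a linear lift of the class of $1$ in the quotient vector space $\cH/J_\chi$, where
\[
J_\chi := \operatorname{span}\{hf: h\in\ker\chi,\ f\in\cH\}
\]
is the algebraic $\ker\chi$-submodule of $\cH$. Such a $\Lambda$ exists iff $1\notin J_\chi$, and any $\Lambda$ so obtained is automatically discontinuous once $1\in\overline{J_\chi}^{\|\cdot\|_\cH}$. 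We therefore target
\[
1\in\overline{J_\chi}^{\|\cdot\|_\cH}\setminus J_\chi
\]
for some character $\chi$, i.e.\ $J_\chi$ should be a proper dense $\cM$-submodule of $\cH$. Since interior point evaluations $\chi_\alpha$ ($\alpha\in\DD$) typically give $J_{\chi_\alpha}$ closed (the hyperplane $\{f\in\cH: f(\alpha)=0\}$), the character $\chi$ must be a boundary character of $H^\infty$.

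The main obstacle is producing such $\cH$ and $\chi$. For the classical non-Pick spaces ($H^2$, $A^2$, weighted Bergman, and so on), every boundary character $\chi$ over $\zeta\in\TT$ already satisfies $1\in J_\chi$ outright, by the elementary factorization
\[
1=(1-\bar\zeta z)^s\cdot(1-\bar\zeta z)^{-s},
\]
valid since $(1-\bar\zeta z)^s\in\ker\chi$ and $(1-\bar\zeta z)^{-s}\in\cH$ for sufficiently small $s>0$; and in Pick examples the pathology is ruled out by Theorem~\ref{T:autocty}. Thus $\cH$ must be specially designed: non-Pick, with $\cM=H^\infty$ dense and a normalized kernel at $0$, yet regular enough at some chosen boundary point $\zeta$ that no relation $1=\sum_i h_if_i$ with $h_i\in\ker\chi$ and $f_i\in\cH$ is possible, while $J_\chi$ is nonetheless dense in $\cH$. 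The construction of such an $\cH$ is delicate and is the crux of the proof; one natural approach is to use a reproducing kernel with strongly anisotropic behaviour, imposing extra smoothness near a single boundary point to exclude $(1-\bar\zeta z)^{-s}$-type singularities while leaving the rest of the structure intact so that $\cM=H^\infty$ continues to hold.

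Granted such $\cH$ and $\chi$, the functional $\Lambda$ is constructed by a Zorn / Hamel-basis argument: pick an algebraic complement $V$ of $J_\chi$ in $\cH$ with $1\in V$ (possible because $1\notin J_\chi$), set $\Lambda=0$ on $J_\chi$ and $\Lambda(1)=1$, and extend linearly on $V$. The module identity follows from
\[
hf-\chi(h)f=(h-\chi(h))f\in J_\chi,
\]
so $\Lambda(hf)=\chi(h)\Lambda(f)=\Lambda(h)\Lambda(f)$. Discontinuity is automatic: any sequence $x_n\in J_\chi$ converging to $1$ in $\cH$-norm (which exists since $1\in\overline{J_\chi}$) satisfies $\Lambda(x_n)=0\not\to1=\Lambda(1)$. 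The principal work is therefore the intermediate construction of $\cH$; the surrounding algebraic setup and definition of $\Lambda$ are routine once the space is available.
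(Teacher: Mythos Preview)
Your algebraic setup is correct: one needs a character $\chi$ on $\cM=H^\infty$ with $1\notin J_\chi$, and then any linear $\Lambda$ vanishing on $J_\chi$ with $\Lambda(1)=1$ satisfies the module identity. But the proposal has a real gap and a misconception that steers you away from the construction that actually works.

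The gap: you never build $\cH$. You call the construction ``the crux of the proof'' and then defer it. Everything else in your write-up is routine bookkeeping; without the example, nothing is proved.

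The misconception: you assert that for an interior evaluation $\chi_\alpha$ ($\alpha\in\DD$) the submodule $J_{\chi_\alpha}$ is the closed hyperplane $\{f:f(\alpha)=0\}$, and hence that one is forced to a boundary character. But $\ker\chi_\alpha=(z-\alpha)H^\infty$, so $J_{\chi_\alpha}=(z-\alpha)\cH$, the \emph{range} of $M_{z-\alpha}$; this equals the hyperplane only when $M_{z-\alpha}$ is bounded below, which is precisely the condition one should try to violate. Relatedly, you do not need $1\in\overline{J_\chi}$ for discontinuity: it suffices that $J_\chi$ be non-closed with $1\notin J_\chi$, since then $\CC\oplus J_\chi$ is a proper non-closed subspace on which $\Lambda$ is already prescribed, and a discontinuous extension exists.

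The paper uses exactly the interior character $\chi_0$. Take $K(z,w)=\sum_{n\ge0}a_n(z\overline{w})^n$ with $a_0=1$, $a_n/a_{n+1}\le1$ for all $n$ (forcing $\cM=H^\infty$ by a standard comparison argument), $a_n^{1/n}\to1$ (so the space lives on $\DD$), and $\inf_n a_n/a_{n+1}=0$; e.g.\ $a_n=k!$ for $k^2\le n<(k+1)^2$. Then $\|z^{n+1}\|/\|z^n\|=(a_n/a_{n+1})^{1/2}$ is not bounded away from $0$, so $M_z$ is injective but not bounded below, and $z\cH$ is non-closed of infinite codimension in $\cH$. Define $\Lambda(\alpha+zf)=\alpha$ on $\CC\oplus z\cH$ and extend discontinuously to $\cH$. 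Since $h-h(0)\in zH^\infty$ for every $h\in H^\infty$, one gets $\Lambda(hf)=h(0)\Lambda(f)=\Lambda(h)\Lambda(f)$. No boundary character and no anisotropic kernel are needed.
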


\begin{proof}
Let $(a_n)_{n=0}^\infty$ be a sequence of strictly positive real numbers with the following properties:
 \begin{enumerate}[(a)]
    \item $a_0 = 1$,
    \item $a_n/a_{n+1} \le 1$ for all $n\ge0$,
    \item $\inf_{n\ge0} a_n/a_{n+1} = 0$, and
    \item $\lim_{n \to \infty} a_n^{1/n} = 1$.
  \end{enumerate}
For instance, we can define $a_n:= k!$ whenever $k^2 \le n < (k+1)^2$.
Then properties~(a) and (b) are obviously satisfied. Moreover, (c) holds since
\[
    \frac{a_{(k+1)^2-1}}{a_{(k+1)^2}} = \frac{k!}{(k+1)!} = \frac{1}{k+1} \xrightarrow{k \to \infty} 0.
\]
Finally, to see (d), notice that, if $k^2 \le n < (k+1)^2$, then
\[
    1 \le a_n^{1/n} \le (k!)^{1/k^2} \le k^{1/k} \xrightarrow{k \to \infty} 1.
\]

  Next, we define a positive semi-definite kernel $K$ on $\DD$ by
\[
    K(z,w) := \sum_{n=0}^\infty a_n (z \overline{w})^n \quad (z,w \in \DD).
  \end{equation*}
Condition~(d) ensures that the power series defining $K$ has radius of convergence $1$.
  Let $\cH$ be the RKHS on $\DD$ with reproducing kernel $K$.
  It is well known and not hard to see that $\cH$ consists of holomorphic functions on $\DD$, and that the monomials
  $(z^n)_{n=0}^\infty$ form an orthogonal basis of $\cH$ with
  \begin{equation}
    \label{eqn:norm_monomials}
    \|z^n\|^2 = \frac{1}{a_n} \quad (n \ge0),
  \end{equation}
  see e.g.\ \cite[\S4.4.2]{PR16}.
  Since $a_0 = 1$, the reproducing kernel $K$ is normalized at $0$.

We next show  that the multiplier algebra $\cM$ of $\cH$ equals $H^\infty$.
Certainly $\cM$ is contractively contained in $H^\infty$,
as was already remarked in \S\ref{S:multipliers}.
On the other hand, 
condition~(b) above can be interpreted as saying  that $a_n/a_{n+1}\le b_n/b_{n+1}$ for all $n\ge0$,
where $b_n := 1$ for all $n\ge0$,  namely the power series coefficients of the reproducing kernel of $H^2$.
In this setting, \cite[Proposition 3.3]{AHMR19} implies that
$H^\infty$, the multiplier algebra of $H^2$, is contractively contained in $\cM$.
Hence equality holds. 
In particular, we see that $\cM$ is dense in $\cH$.

To construct the functional $\Lambda$, we first
show that the (non-closed) subspace $z \cH$ has infinite codimension
in $\cH$. To this end, notice that $M_z$ is an injective operator on $\cH$,
and condition~(c) and equation~\eqref{eqn:norm_monomials}
together imply that it is not bounded below.
Therefore the range of $M_z$ is not closed and thus it has infinite codimension in $\cH$.
Hence there exists a discontinuous linear functional $\Lambda: \cH \to \CC$
such that
\[
    \Lambda(\alpha + z f) = \alpha \quad (\alpha \in \CC, ~f \in \cH).
\]
  
We finish the proof by showing the multiplicativity statement in (iii).
To this end, let $h \in \cM = H^\infty$ and let $f \in \cH$.
Then there exists $g \in H^\infty$ such that $h - h(0) = z g$.
In particular, $\Lambda(h) = h(0)$. Moreover, $h f = h(0) f +  z g f$ and so
\[
  \Lambda( h f) = \Lambda(h(0) f) + \Lambda( z g f) = h(0) \Lambda(f)= \Lambda(h) \Lambda(f),
\]
as claimed.
\end{proof}


\section{Point evaluations}\label{S:ptevals}

Let $\cH$ be a RKHS on $X$.
If $\Lambda:\cH\to\CC$ is a point evaluation at a point $x_0$ of $X$, 
namely $\Lambda(f):=f(x_0)$ for all $f\in\cH$,
then $\Lambda$ satisfies the hypothesis~(i) of Theorem~\ref{T:main}.  
Indeed, as remarked earlier, if $f\in\cH$ is cyclic, then $f(x)\ne0$ for all $x\in X$, 
and so $\Lambda(f)\ne0$. Obviously $\Lambda(1)=1$ as well.

This raises the question of whether every linear functional $\Lambda$ 
satisfying the hypotheses of Theorem~\ref{T:main}
is a point evaluation at some point of $X$. 
The answer is affirmative in certain  cases, 
notably the Hardy space $H^2$ on the unit disk \cite[Theorem~2.1]{MR15} and,
more generally, the Drury--Arveson space $H^2_d$ on the unit ball in $\CC^d$ for $1\le d\le\infty$ 
(combine Theorem~\ref{T:main} above with \cite[Lemma~5.3]{Ha17}).
Under the added assumption that $\Lambda$ is continuous,
further results of this kind have been established for various Banach function spaces on
the unit disk \cite[Theorem~3.1]{MR15} and on various domains in $\CC^n$ \cite[\S5]{Sa20}.

However, it is unrealistic to hope for an affirmative answer in the general 
situation considered in Theorem~\ref{T:main}. 
For example, if we think of $H^2$ as a RKHS on the punctured disk $\DD\setminus\{x_0\}$
for some fixed point $x_0 \in \DD \setminus \{0\}$,
then it still has a normalized complete Pick kernel,
and the linear functional $\Lambda(f):=f(x_0)$  is non-zero on cyclic functions,
but is not given by point evaluation at any point of $\DD\setminus\{x_0\}$.

A more `natural' example is the case when $\cH=\cD_\zeta$, 
the local Dirichlet space at a point $\zeta\in\TT$  \cite{RS91}.
This  is a RKHS on the  unit disk $\DD$,
and by a result of Shimorin \cite{Sh02} it has a complete Pick kernel. 
In this case, the functional $\Lambda(f):=\lim_{z\to\zeta}f(z)$ is well-defined and continuous on $\cD_\zeta$,
and is non-zero on cyclic functions, but it is not given by evaluation at any point of~$\DD$.

These examples suggest that, if we want to conclude that $\Lambda$ is a point evaluation,
then we need an assumption that ties $\Lambda$ more closely to the set $X$.
One possibility is to suppose that $\Lambda(f)\ne0$ for every $f\in\cH$ that is 
\emph{nowhere zero} on $X$. This eliminates the two counterexamples just mentioned.
For $H^2$ on $\DD\setminus\{x_0\}$ this is easy to see directly, 
and for $\cD_\zeta$ it is a special case of \cite[Theorem~1.3]{MRR18}.
However, as our final result shows,
in the abstract setting of this paper,
even this stronger hypothesis is not enough the yield the desired conclusion.

\begin{proposition}
There exist a RKHS $\cH$ on a set $X$ and a linear functional $\Lambda:\cH\to\CC$ with the following properties:
\begin{enumerate}[\normalfont\rm(i)]
\item The reproducing kernel $K:X\times X\to\CC$ associated to $\cH$ is a normalized complete Pick kernel.
\item The linear functional $\Lambda$ satisfies $\Lambda(1)=1$ and $\Lambda(f)\ne0$ for each $f\in\cH$ that is nowhere zero on $X$.
Furthermore $\Lambda$ is continuous on $\cH$.
\item $\Lambda$ is not given by point evaluation at any point of $X$.
\end{enumerate}
\end{proposition}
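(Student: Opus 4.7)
The plan is to realize $\cH$ as a restriction of a Drury--Arveson space on $\BB_d$ (for some $d\ge 2$) to a punctured ball, and to take $\Lambda$ as evaluation at the puncture via the unique holomorphic extension. Concretely, fix $d\ge 2$ and $p\in\BB_d\setminus\{0\}$, set $X:=\BB_d\setminus\{p\}$, and let $\cH:=\{f|_X:f\in H^2_d\}$ equipped with the norm transported from $H^2_d$. Since $X$ is open and connected, the restriction map $f\mapsto f|_X$ is injective by the identity principle, so $\cH$ is a Hilbert space whose reproducing kernel is simply the restriction of the Drury--Arveson kernel. This restricted kernel is normalized at $0\in X$, and both the condition $K(x,y)\neq0$ and the complete Pick condition \eqref{eqn:CNP} pass directly from $\BB_d\times\BB_d$ to the subset $X\times X$. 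Define $\Lambda(f|_X):=f(p)$; this is well defined by injectivity of restriction, and the identity $\Lambda(f|_X)=\langle f,k_p\rangle_{H^2_d}$ shows that $\Lambda$ is continuous on $\cH$, with $\Lambda(1)=1$. For any $x\in X$ we have $p\ne x$, so picking $i$ with $p_i\ne x_i$ and testing on the coordinate function $z_i-x_i$ shows $\Lambda\neq\text{ev}_x$: it vanishes at $x$ but not at $p$.

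The heart of the argument is the verification that $\Lambda(f)\ne 0$ whenever $f\in\cH$ is nowhere zero on $X$. Supposing $f\in H^2_d$ satisfies $f(x)\ne 0$ for every $x\in X=\BB_d\setminus\{p\}$, the zero set $Z(f)\subset\BB_d$ is contained in $\{p\}$. But every nonzero $f\in H^2_d$ is holomorphic on $\BB_d$, and if $Z(f)$ is non-empty then it is a complex analytic hypersurface of pure complex codimension one; since $d\ge 2$, such a hypersurface cannot reduce to a single point. Hence $Z(f)=\varnothing$, so in particular $f(p)\ne 0$, i.e., $\Lambda(f)\ne 0$. (Note that this in turn implies $\Lambda(f)\ne 0$ for all cyclic $f\in\cH$, so by Theorem~\ref{T:main} our $\Lambda$ is automatically multiplicative, consistent with the direct multiplicativity coming from point evaluation on $H^2_d$.)

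The main obstacle is to identify the feature that distinguishes higher-dimensional balls from the disk and makes the construction run. In one variable the naive analogue---take $\cH=H^2$ on $X=\DD\setminus\{p\}$ with $\Lambda$ equal to evaluation at $p$---fails, because $z-p$ is nowhere zero on $X$ yet $\Lambda(z-p)=0$, so the nowhere-zero hypothesis breaks down. In $\BB_d$ for $d\ge 2$, analytic zero sets have positive complex dimension and therefore cannot be a single point, and this is precisely what removes the obstruction. The only other potential pitfall is the verification that the restricted kernel genuinely retains the normalized complete Pick structure, but both properties are manifestly preserved under restriction to any subset containing the normalization point.
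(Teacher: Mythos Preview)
Your proof is correct and follows essentially the same construction as the paper: restrict the Drury--Arveson space $H^2_d$ (with $d\ge 2$) to the punctured ball $X=\BB_d\setminus\{p\}$, take $\Lambda$ to be evaluation at the puncture via the unique holomorphic extension, and invoke the fact that holomorphic functions in several variables have no isolated zeros. You supply more detail in places (injectivity of restriction, the hypersurface description of $Z(f)$, the explicit test function for (iii)), but the argument and the key idea are the same.
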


\begin{proof}
Fix $d\ge2$, and let $H^2_d$ denote the Drury--Arveson space on $\BB_d$, the open unit ball in $\CC^d$.
Fix a point $x_0\in\BB_d\setminus\{0\}$, and set $X:=\BB_d\setminus\{x_0\}$ and
$\cH:=H^2_d|_{X}$.

(i) The space $\cH$ is a RKHS on $X$ whose kernel is normalized at $0$ and has the complete Pick property.

(ii) Each function $f\in\cH$ has a unique extension to a holomorphic function on $\BB_d$.
Evaluating this extension at $x_0$ defines a continuous linear functional $\Lambda$ on $\cH$ such that $\Lambda(1)=1$.
Moreover, since holomorphic functions in two or more variables have no isolated zeros, it follows 
that $\Lambda(f)\ne0$ for every function $f\in\cH$ that does not vanish on $X$.

(iii) By considering its action on the coordinate functions, we see that $\Lambda$ is not given by evaluation at any point of $X$.
\end{proof}

The counterexamples exhibited above are closely 
related to the notion of \emph{algebraic consistency} or \emph{maximal domains}.
To elaborate somewhat, a non-zero continuous linear functional $\Lambda$ on a RKHS $\cH$ on $X$
is said to be \emph{partially multiplicative} if $\Lambda(fg) = \Lambda(f) \Lambda(g)$ for all $f, g \in \cH$
satisfying $f g \in \cH$. Clearly, point evaluation at any point in $X$ defines a partially
multiplicative functional. The set $X$ is said to be a \emph{maximal domain} for $\cH$
if every partially multiplicative functional is of this type.

It was shown in \cite[\S 2]{MS17} that $X$ can always be enlarged to a maximal domain
by regarding $\cH$ as a space of functions on its set of partially multiplicative functionals.
For concrete spaces of holomorphic functions on domains in $\CC^d$, one can sometimes determine
the maximal domain by considering all points $\lambda \in \CC^d$ such that the functional
$p \mapsto p(\lambda)$, defined for all polynomials $p$, is continuous with respect to the norm of $\cH$;
see \cite[\S 3]{Sa20}.
Moreover, in the case when $\cH$ has a normalized complete Pick kernel, there are alternative constructions
of the maximal domain, using weak-$*$ continuous characters on the multiplier algebra $\cM$,
Gleason parts of the character space of $\cM$, or a Zariski type closure; see Proposition 4.2
and Remark 4.3 in \cite{AHMR17}.
For more information on maximal domains, we refer the reader to  \cite{AHMR17}, \cite[Def.1.5]{CM95}, \cite[\S5]{Ha17}, \cite[\S2]{MS17}, 
and \cite[\S3]{Sa20}.

\section*{Acknowledgements}
We thank the anonymous referees for their careful reading of the paper and
for several helpful suggestions of improvements.

Chu was supported by a CRM--Laval Postdoctoral Fellowship.
Hartz was partially supported by a GIF grant.
Mashreghi was partially supported by an NSERC Discovery  Grant. 
Ransford was supported by grants from NSERC and the Canada Research Chairs program.

\bibliographystyle{plain}
\bibliography{biblist}

\end{document}